\newcommand{\dist}{\operatorname{dist}}
\newcommand{\rank}{\operatorname{rank}}
\newcommand{\conv}{\operatorname{conv}}
\newtheorem{Teo}{\textbf{Theorem}}[section]
\newtheorem{Pro}[Teo]{\textbf{{Proposition}}}
\newtheorem{Def}[Teo]{\textbf{{Definition}}}
\newtheorem{Lem}[Teo]{\textbf{{Lemma}}}
\newtheorem{Step}{\textbf{{Step}}}
\newtheorem{Rem}[Teo]{\textbf{{Remark}}}
\numberwithin{equation}{section}
\newcommand{\tresp}[1]{{\left\vert\kern-0.25ex\left\vert\kern-0.25ex\left\vert #1
    \right\vert\kern-0.25ex\right\vert\kern-0.25ex\right\vert}}
\title[Normal and starlikes tilings]{Normal and starlike tilings in separable Banach spaces}
\author{Robert Deville and Mar Jimenez-Sevilla}
\date{July, 2020}
\address{Robert Deville, Institut de Math\'ematiques de Bordeaux UMR 5251, Universit\'e de Bordeaux 1, 33405 Talence, France}
\email{Robert.Deville@math.u-bordeaux.fr}
\address{Mar Jimenez-Sevilla, Instituto de Matem\'atica Interdisciplinar y Departamento de An\'alisis Matem\'atico y Matem\'atica Aplicada, Facultad de Ciencias Matem\'aticas, Universidad Complutense de Madrid, Madrid 28040, Spain}
\email{marjim@ucm.es}
\thanks{
M. Jimenez-Sevilla  has been partially supported  by Ministerio de Ciencia, Innovaci\'on y Universidades project 
PGC2018-097286-B-I00 (Spain) and  grant PRX19-00094-Estancias de profesores e investigadores s\'enior en centros extranjeros, incluido el Programa Salvador de Madariaga (Spain).}
\keywords{Tilings}
\subjclass[2010]{46B20}
\begin{document}

\begin{abstract} In this note, we provide a starlike and normal  tiling in any separable Banach space. That means, there are positive constants $r$ and $R$ (not depending on the separable Banach space) such that every tile of this tiling is starlike,  contains  a ball of radius $r$ and   is contained in a ball of radius $R$. 
\end{abstract}

\maketitle



\section{Introduction}

 A family  $\{T_i\}_{i\in I}$ of subsets of a Banach space $X$ is a {\em tiling of } $X$ if $\bigcup_{i\in I}T_i=X$,
each tile $T_i$ is a closed  subset of $X$ with nonempty interior and  $C_i\cap C_{j}$ has  empty interior for all $i\not= j$.  
The tiling is said to be convex (starlike)  whether the tiles $T_i$ are convex (starlike, respectively).

In the following, all Banach spaces are considered over the reals. We use the standard notation in a Banach space $X$ with norm 
$||\cdot||$. The closed ball of center $x\in X$ and radius $r>0$   in $X$ is denoted by $B(x,r)$ (also by $B_{||\cdot||}(x,r)$ or 
 $B_{X}(x,r)$ if it is necessary to precise the norm or the space, respectively, we are referring to).

The study of tilings in  infinite dimensional Banach space started with V. Klee in \cite{Klee1} and \cite{Klee2}. V. Klee constructed  in every separable Banach space $X$  by means of biorthogonal systems a convex tiling by ``parallelotopes'' that are {\em finitely bounded} (bounded intersections with each finite dimensional subspace) but norm-unbounded.
He also studied the existence of tilings based on {\em discrete proximinal} sets in some Banach spaces. 
Suppose that a set $A\subset X$ is a discrete proximinal set and define the associated ``Voronoi cells'' 
$$T_a=\{x\in X:\,||x-a||\le ||x-a'|| \text{ for all } a'\in A\}.$$ 
V. Klee proves that the family of sets $\{T_a\}_{a\in A} $ is a starlike tiling whenever the norm is strictly convex.
Moreover, if $X$ is a Hilbert space and the norm $||\cdot||$ is a given by a scalar product in $X$, then the tiling is convex.

\begin{Def} A tiling $\{T_i\}_{i\in I}$  of a Banach space $(X,||\cdot||)$ is said to be  $K$-normal if there are constants $r>0$,  $K>0$ and 
 points $x_i\in T_i$ satisfying $B(x_i,r)\subset T_i \subset B(x_i,rK)$ for all $i$.
\end{Def}
For any  $K$-normal tiling $\{T_i\}_{i\in I}$ and for every $s>0$ the homothetic family $\{sT_i\}_{i\in I}$ satisfies $B(sx_i,sr)\subset sT_i \subset B(sx_i,srK)$ for all $i$ and thus $\{sT_i\}_{i\in I}$ is a $K$-normal tiling of $(X,||\cdot||)$ as well. 
Notice that the constant $K$ of normality depends on the norm. By renorming with an equivalent norm $|||\cdot|||$ such that $|||x|||\le||x||\le M|||x|||$ for some $M>0$ and all $x\in X$,
then $B_{|||\cdot|||}(x_i,\frac{r}{M})\subset T_i \subset B_{|||\cdot|||}(x_i,rK)$ for all $i$ and the constant of normality of the tiling becomes $KM$. A tiling is said to be  normal if it is $K$-normal for some $K>0$.

For example, for every $n\in \mathbb N$, in $\ell_2^n$ with the euclidean norm, the ``Voronoi cells'' associated with a  maximal $2$-separated set is a $2$-normal convex tiling. R. Deville and M. Garc\'\i a-Bravo proved that in any finite dimensional space there is a $2$-normal starlike tiling \cite{DevilleGarciaBravo}.
Also, the space $(c_0(\Gamma),||\cdot||_\infty)$ (for any non empty set $\Gamma$) has a tiling by closed balls of radius $1$ with centers at the points having coordinates in $2\mathbb Z$ with only a finite number of non-null coordinates. The Banach space
$(\ell_\infty(\Gamma),||\cdot||_\infty)$ (for any non empty set $\Gamma$) has a tiling by closed balls of radius $1$ with centers at points having coordinates in $2\mathbb Z$ as well.
V. Klee proved the existence in the space $(\ell^{p}(\Gamma),||\cdot||_p)$ with $1\le p<\infty$ and for any infinite cardinal $\Gamma$ satisfying $\Gamma^{\aleph_0}=\Gamma$ the existence of a $2^{1/p}$-separated proximinal set $A$ such that the ``Voronoi cells'' associated with the set $A$ form a $2^{(p-1)/p}$-normal starlike tiling of the space $\ell^{p}(\Gamma)$. Moreover, when $p=2$ the tiling is convex and when $p=1$ the tiles  are pairwise disjoint translates of the closed unit ball $B_{||\cdot||_1}(0,1)$.

Later on, V. Fonf, A. Pezzota and C. Zanco \cite{FPZ} constructed for any infinite set $\Gamma$
 a convex tiling $\{T_i\}_{i \in \Gamma}$ in $\ell_\infty(\Gamma)$ with bounded tiles which is universal in the sense that
for any normed space $X$ with a norming set  $S\subset X^*$ of norm one functionals of cardinal $|\Gamma|$,  there is an isomorphic embedding $I:X\rightarrow \ell_\infty(\Gamma)$ such that $\{I^{-1}(T_i)\}_{i\in \Gamma}$ is a convex tiling of $X$ with bounded tiles 
and inner radii uniformly bounded from below by a positive constant, i.e. there is a constant $r>0$ and there are points $x_i \in T_i$ satisfying $B(x_i,r)\subset T_i$ for all $i$. Nevertheless the diameter of the tiles are not uniformly bounded from above. So the tiling $\{I^{-1}(T_i)\}_{i\in \Gamma}$ of $X$ is not normal.

For more properties on tilings such as point-finite, locally finite, protective,  smooth or strictly convex, see \cite{C}, \cite{DBV}, \cite{Fonf}, \cite{FPZ2}, \cite{FZ}, \cite{FZ2}  and references thererin.

It was proved by V. Fonf and J. Lindenstrauss  \cite{FL} that in the separable Hilbert space $\ell_2$ any
maximal  $1$-separated  set  is not proximinal. So the construction of convex and normal tilings with ``Voronoi cells'' given by V. Klee is not possible in $\ell_2$. Later on, D. Preiss \cite{Preiss} proved the existence of  convex and normal tilings in $\ell_2$. Recently, R. Deville and M. Garc\'\i a-Bravo proved the existence of normal  and starlike tilings in any separable Banach space with a Schauder basis \cite{DevilleGarciaBravo}. In this note, we reexamine the construction of convex  normal tilings given by D. Preiss in $\ell_2$  and the construction of unbounded convex tilings 
with inner radii uniformly bounded from below by a positive constant in general separable Banach spaces given also by Preiss \cite{Preiss}. As a result we get 
normal  and starlike tilings (with the same constant of normality)  for any separable Banach space $(X,||\cdot||)$.
 Thus, the construction provides a universal constant of normality, i.e. it can be taken  not to depend on the 
(separable) space nor on the norm. Also it is worth to mention that these normal and starlike tilings can be defined in such a way that some  of the tiles are convex.

 Recall that the Banach-Mazur distance from any $n$-dimensional space $X$ to $\ell_2^n$ satisfies $\dist(X,\ell_2^n)\le\sqrt{n}$. Since $\ell_2^n$ has a  $2$-normal convex tiling, then $X$ has a $2\sqrt{n}$-normal convex  tiling. The question whether every separable Banach space $X$ (finite or infinite dimensional) has a $K$-normal convex tiling with constant of normality $K$ independent of the space and  the norm remains open. We do not know whether a positive answer to the finite dimensional case provides a positive answer to the infinite dimensional case.

\section{Normal and starlike tilings in separable Banach spaces}

Let start with a lemma providing an auxiliary tiling in $\mathbb R^2$.

\begin{Lem}\label{R2}
 Let us consider the subsets of $\mathbb R^2$,
\begin{align*}
D&=[-1,1]\times [-1,1],\quad U_0=\{(x,y)\in \mathbb R^2:\, |x|+|y|\le 2\}, \\
U_1&=\{(x,y)\in \mathbb R^2:\,0\le x\le 2;\, y\ge 0; \, x+y\ge 2\}, \quad
 U_2=\{(x,-y)\in \mathbb R^2:\,(x,y)\in U_1\},\\    U_3&=-U_1, \quad   U_4=-U_2.
\end{align*}
Then,  the family $\{U_i:\,i=0,1,2,3,4\}$ is a tiling of $\{(x,y)\in \mathbb R^2:\,|x|\le 2\}$ and for any $1<a<2$ and $0<b<1$
with $a+b>2$ there are constants $0<r,\delta<1$ satisfying
\begin{enumerate}
\item[(L.a)] $D\subset U_0$,
\item[(L.b)] $(a,b)+rD\subset U_1\cap \{(x,y)\in \mathbb R^2:\,|y|\le 1\}$,
\item[(L.c)]  $(a,t)+rD\subset U_0$ whenever $|t|\le \delta b$.
\end{enumerate}
\end{Lem}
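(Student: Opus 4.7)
The plan is to split the argument into three parts: (i) a direct verification that the five sets tile the vertical strip, (ii) solving the linear inequalities that encode (L.a) and (L.b) to produce $r$, and (iii) shrinking $r$ slightly if necessary to leave enough slack for the constant $\delta$ in (L.c).

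For (i), I would decompose the strip $\{(x,y):|x|\le 2\}$ into its four closed quadrants and check that in the first quadrant $U_0$ contributes the triangle $\{x,y\ge 0,\ x+y\le 2\}$ while $U_1$ is the complementary slab $\{0\le x\le 2,\ y\ge 0,\ x+y\ge 2\}$. These meet only along the segment $x+y=2$, so their interiors are disjoint and their union covers $\{0\le x\le 2,\ y\ge 0\}$; the three reflections $U_2,U_3,U_4$ dispatch the other quadrants identically. Assertion (L.a) is then immediate from $|x|+|y|\le 2$ on $D$.

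For (L.b), the set $(a,b)+rD$ is the rectangle $[a-r,a+r]\times[b-r,b+r]$, and its inclusion in $U_1\cap\{|y|\le 1\}$ reduces to the five conditions
\[ r\le a,\quad r\le 2-a,\quad r\le b,\quad r\le 1-b,\quad r\le \tfrac{a+b-2}{2}. \]
Under the hypotheses $1<a<2$, $0<b<1$, $a+b>2$, every upper bound on $r$ is strictly positive, so any $r$ smaller than their minimum (automatically in $(0,1)$, since $r\le b<1$) succeeds.

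For (L.c), the rectangle $(a,t)+rD$ lies in $U_0=\{|x|+|y|\le 2\}$ exactly when the maximum of $|x|+|y|$ over it is at most $2$. Since $a-r>0$, that maximum equals $(a+r)+(|t|+r)=a+|t|+2r$, so the requirement becomes $|t|\le 2-a-2r$. I would therefore add one further constraint, $r<\tfrac{2-a}{2}$, when selecting $r$ (still a positive requirement), and then set $\delta:=(2-a-2r)/b$. Because $a+b>2$ forces $b>2-a>2-a-2r$, this $\delta$ is positive and less than $1$, and by construction $\delta b=2-a-2r$, so (L.c) holds. The only point that needs checking is that all the upper bounds on $r$ can be enforced simultaneously, but this is automatic from the strict open hypotheses, so no genuine obstacle arises.
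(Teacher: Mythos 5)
Your proposal is correct and follows essentially the same route as the paper: the paper's proof simply records the resulting constraints $0<r\le\min\{1-b,\,\frac{a+b}{2}-1,\,2-a\}$, $r<1-\frac{a}{2}$ and $0<\delta\le\frac{2-2r-a}{b}$, which are exactly the inequalities you derive (your extra bounds $r\le a$ and $r\le b$ are redundant consequences of the others). You additionally spell out the quadrant-by-quadrant verification of the tiling claim and the check that $\delta<1$, which the paper leaves to the reader and to the figure.
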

\begin{proof} The computations on the conditions given for the squares $(a,b)+rD$ and $(a,t)+rD$ whenever $|t|\le \delta b$ 
(see Figure \ref{picturetilingR2-modelo3})  yield   the following conditions on $r$ and $\delta$:
 \linebreak
$0<r\le \min\{1-b,\,\frac{a+b}{2}-1,\,2-a\}$,\  $r<1-\frac{a}{2}$ \ and \ $0<\delta \le  \frac{2-2r-a}{b}$.   
\end{proof}
\begin{figure}
 \includegraphics[width=5cm]{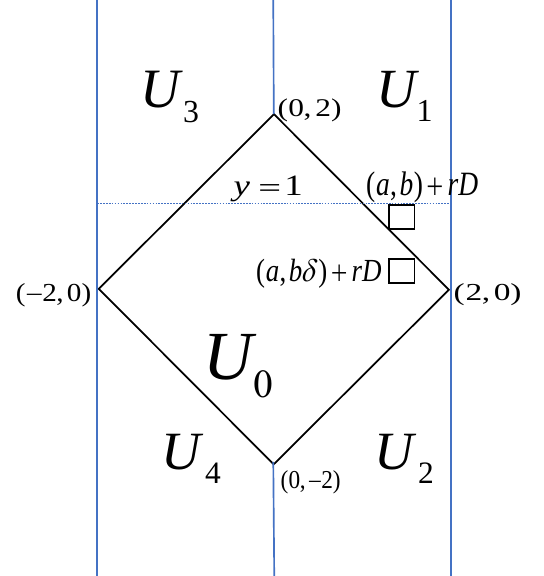}
 \caption{Tiling in $\mathbb R^2$}
  \label{picturetilingR2-modelo3}
\end{figure}

\begin{Teo} \label{separablecase} Let $(X,||\cdot||)$ be a separable Banach space. Then, there is a $K$-normal and starlike tiling of $X$  
for any constant $K>2+\frac{1}{r}(a+2b+\frac{8}{\delta})$ (where the constants $a,b$, $r$ and $\delta$ are given in Lemma \ref{R2}). 
\end{Teo}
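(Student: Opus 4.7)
The plan is to refine Preiss's general unbounded convex tiling of $X$ into a bounded starlike tiling by applying Lemma \ref{R2} as a cutting template along each unbounded direction.

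First, I would invoke Preiss's construction of an unbounded convex tiling $\{C_n\}_{n\in\mathbb{N}}$ of $X$ with uniform inner radius $\rho>0$, so that $B(y_n,\rho)\subset C_n$ for suitable $y_n\in X$. Each $C_n$ may be unbounded, but Preiss's construction (via a Markushevich basis) encodes the unboundedness of $C_n$ along an explicit direction $e_n$, with a controlled transverse cross-section. Choosing a complementary direction $f_n$, I would embed a rescaled copy of the model tiling $\{U_0,U_1,U_2,U_3,U_4\}$ from Lemma \ref{R2} into the 2-plane $\spann(e_n,f_n)$, with scaling factor chosen so that the unit square $D$ has half-side $\rho$. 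I would then stack countably many translated copies of this rescaled picture along $e_n$ at spacing proportional to $\rho$, in such a way that every unbounded wedge $U_i$ protruding from one level is absorbed into the bounded core $U_0$ of an adjacent level, producing a countable family of bounded subtiles of $C_n$.

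The three conditions of Lemma \ref{R2} play distinct roles. Condition (L.a) ensures the inner ball $B(y_n,\rho)$ of the original tile lies inside the rescaled $U_0$, so each refined subtile contains a ball of radius at least $r\rho$. Condition (L.b) verifies that the shifted unit square at $(a,b)$ survives inside $U_1$, so the wedge pieces also inherit an inner ball of radius $r\rho$ after absorption. Condition (L.c) controls the vertical tolerance $\delta b$ by which successive wedges can be stacked without leaving $U_0$, which bounds the longitudinal extent of each refined tile by roughly $8/\delta$ scaling units. Adding the diameter of the rescaled $U_0$ (contributing the ``$2$''), the wedge reach $a+2b$, and the stack length $8/\delta$, and then dividing by the inner radius $r\rho$, yields the claimed bound $K>2+\tfrac{1}{r}\bigl(a+2b+\tfrac{8}{\delta}\bigr)$. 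Starlikeness of each refined tile follows from the starlikeness of each $U_i$ about a common interior point inside the rescaled $U_0$, combined with the convexity of $C_n$.

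The main obstacle is consistency across different unbounded tiles: when $C_n$ and $C_m$ of Preiss's tiling share a common boundary face, the wedge decompositions on the two sides must match, so that the refined family is still a tiling of $X$. This forces a coherent global choice of the directions $e_n,f_n$ and the spacings, which is precisely what makes the particular structure of Preiss's construction (and its underlying Markushevich basis) essential. Once this coherence is established, the remaining verifications---covering, interior disjointness, uniform inner and outer radii, and starlikeness with a common star-center inside each rescaled $U_0$---follow by tracking the scaling factor $\rho$ against the constants $r,\delta,a,b$ of Lemma \ref{R2}.
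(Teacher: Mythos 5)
Your plan has a genuine structural gap: you treat each unbounded tile of Preiss's convex tiling as if it were unbounded along a single direction $e_n$, and propose to cut it by stacking a rescaled copy of the planar template of Lemma \ref{R2} in a two-plane $\spann(e_n,f_n)$. But in Preiss's construction (reproduced in the paper as Steps \ref{semibetasystem}--\ref{tilingwithtubes}) the tiles are cylinders around finite-dimensional subspaces of \emph{every} dimension: they satisfy $B(x^k_j+V_k,r)\subset C^k_j\subset B(x^k_j+V_k,R)$ with $\dim V_k=k$ arbitrary. A two-dimensional template stacked along one axis cannot chop a tube around a $k$-dimensional subspace, $k\ge 2$, into uniformly bounded pieces; cutting in $k$ independent directions with planar templates would make the normality constant grow with $k$, which is essentially the open problem about dimension-independent convex normal tilings mentioned in the introduction. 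The paper's actual use of Lemma \ref{R2} is quite different from yours: the maps $\pi_{j,k}(z)=(e^*_{k+1}(z),\,v^*_{j,k+1}\circ Q_{k,k+1}(z))$ pull the template back to the quotients $Z_k=X/V_k$ to build the \emph{convex} tilings $\{H^k_j\}$ there (the norming property $\sup_j|v^*_{j,k+1}(v)|\ge\delta\|v\|$ is what makes the central tile $H^k_0$ bounded by $4/\delta$, whence the $8/\delta$ in $R$); the subsequent bounded refinement of each cylinder $C^k_j$ is done instead by intersecting with translated ``modified Voronoi cells'' of a maximal $2r$-separated subset of $V_k$. That Voronoi step is the source of both the starlikeness and the dimension-independent contribution $2r$ to the outer radius $R'=R+2r$, and it has no analogue in your plan.

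Two further points. First, the ``main obstacle'' you identify --- matching the wedge decompositions of adjacent tiles across a common boundary face --- is not an obstacle at all: if each tile is refined separately by a family covering it with pairwise disjoint interiors, the union over all tiles is automatically a tiling of $X$, since tile boundaries have empty interior; no coherence of directions or spacings is needed. Second, your claim that starlikeness follows from ``the starlikeness of each $U_i$ about a common interior point inside the rescaled $U_0$'' cannot work as stated: $U_1,\dots,U_4$ have interiors disjoint from $U_0$, so no such common star-center exists. In the paper, starlikeness of $C^k_{i,j}=(x^k_j+D^k_i)\cap C^k_j$ comes from the Voronoi cell $D^k_i$ being starlike about its center $d^k_i$ together with the convexity of $C^k_j$ and the fact that $x^k_j+d^k_i\in x^k_j+V_k\subset C^k_j$. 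Your accounting for the constant $2+\frac1r\bigl(a+2b+\frac{8}{\delta}\bigr)$ likewise does not match where the terms actually arise ($2$ is $2r/r$ from the separation of the Voronoi centers, $a+2b$ from property (2.d) of the quotient tiling, and $8/\delta$ from twice the outer radius $4/\delta$ of $H^{k+1}_0$), so the agreement is coincidental rather than a verification.
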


 It is worth to mention that some  tiles of this tiling are convex (see Remark \ref{k=0}, Propositions \ref{separableconvex} and \ref{Ndim} and Remark \ref{finitedimensionalconvex} regarding the convexity of some  tiles).

\bigskip

\noindent {\em Proof of Theorem \ref{separablecase}.} Since it is already known that finite dimensional spaces have $2$-normal starlike tilings \cite{DevilleGarciaBravo}, we may assume that $X$ is infinite dimensional. The proof consists on a refinement of the convex tiling obtained by D. Preiss given in \cite[Theorem 9]{Preiss} by constructing a starlike tiling in each of those tiles (Step \ref{refinement}). In order to give a complete proof of the theorem we will provide as well the proof of several lemmas given in \cite{Preiss} adapted to our particular case and  with some modifications in the constants. 

\medskip

Let us start the proof by considering $\{e_i,e_i^*\}_{i=1}^\infty$ a fundamental biorthogonal system in $X$. For any $\varepsilon>0$ we may assume  up to a $(1+\varepsilon)$-renorming that 
$||e_i^*||=||e_i||=1$ for all $i$. Indeed, from the result given in \cite[Corollary 1.26]{HMVZ}, we get a fundamental biorthogonal system $\{a_i,a_i^*\}_{i=1}^\infty$ such that $||a_i||||a^*_i||\le 1+\varepsilon$ for all $i$.
 By defining $e^*_i=\frac{a^*_i}{||a^*_i||}$ and   $e_i={||a^*_i||}{a_i}$ for all $i$ we get 
$||e^*_i||=1=e^*_i(e_i)$  and $1\le ||e_i||\le 1+\varepsilon$ for all $i$. Considering the $(1+\varepsilon)$-equivalent norm 
$|\cdot|$
whose closed unit ball is 
$B_{|\cdot|}=\overline{\conv}(B_{||\cdot||}\cup\{\pm e_i\})$ we get the normalized fundamental biorthogonal system 
$\{e_i^*,e_i\}$ for the norm $|\cdot|$. In the following we use same notation $||\cdot||$ for the new norm.

\medskip

Let us define for 
$k\in \mathbb N\cup \{0\}$ the finite dimensional subspaces, $V_k=[\{e_1,\cdots,e_k\}]$ the linear span of $e_1,\dots,e_k$
with $V_0=\{0\}$. Also, we define the quotients $Z_k=X/V_k$ with the corresponding quotient norm $||\cdot||_{Z_k}$ for every $k\in \mathbb N\cup\{0\}$. If there is no ambiguity on the norm we are referring to  we shall write only $||\cdot||$ for the quotient norm. Consider la natural projections
\begin{equation*} 
Q_{k}:X\rightarrow Z_{k} , \quad Q_{k}(\widehat{x})=\widehat{x},\
\end{equation*}

\begin{equation*}
Q_{k,k+1}:Z_k\rightarrow Z_{k+1} , \quad Q_{k+1}(\widehat{x})=\widehat{x},
\end{equation*}
For simplicity, we refer to the corresponding equivalence classes in $Z_k$ for all $k$ in the same way.

\medskip

\begin{Step} \label{semibetasystem} Consider $Z_{k}=X/V_k$ with the canonical quotient norm and a constant $0<\delta<1$. Then, there is a family 
$\{v^*_{j,k},v_{j,k}\}_{j=0}^\infty \subset Z_{k}^*\times Z_{k}$ such that 

\begin{enumerate}

\item[(1.a)] $||v^*_{j,k}||=||v_{j,k}||=1=v_{j,k}^*(v_{j,k})=1$ for all $j,k$.

\item[(1.b)]  $|v_{j,k}^*(v_{j',k})|\le \delta$ if $j<j'$.

\item[(1.c)]  $ \sup_j|v_{j,k}^*(v)|\ge \delta ||v||$ for all $ v\in Z_{k}$.

\end{enumerate} 
\end{Step}
\begin{proof} 
Several proofs are given in \cite[Lemma 7]{Preiss} and \cite[Lemma 2.5]{DevilleGarciaBravo}.
Let us give here the following proof. Consider a dense set $\{x_n\}_{n=0}^\infty$ in the unit sphere of $Z_k$ and $\{x_n^*\}_{n=0}^\infty$ the associated functionals with norm one such that
$x_n^*(x_n)=1$. Define $v_0=x_0$,\,  $v_0^*=x_0^*$ and $n_0=0$. Take  the first natural (if it exists) such that  $n_1>n_0$
 and $|v_0^*(x_{n_1})|\le \delta$ and define $v_1=x_{n_1}$ and $v_1^*=x_{n_1}^*$. In general, take $n_{j+1}$ the first natural (if it exists) greater than $n_j$ such that 
$|v_i^*(x_{n_{j+1}})|\le \delta$ for $i\le j$ and define $v_{j+1}=x_{n_{j+1}}$ and $v_{j+1}^*=x_{n_{j+1}}^*$. 
If the space $Z_k$ is finite dimensional the above process will stop at some step $j_0$ and the family $\{v^*_j,v_j\}_{j=1}^{j_0}$ would satisfy properties (1.a)-(1.c). In our case $Z_k$ is infinite dimensional and the above process continues for every $j$ (notice that
$\cap_{i\le j}\ker v^*_i$ is a non-trivial subspace) and it produces an infinite sequence 
$\{v_j^*,v_j\}_{j=0}^\infty$ satisfying properties (1.a)-(1.c). Indeed, it is clear that properties (1.a) and (1.b) hold. To prove property (1.c) it is enough to see that (1.c) holds for the points of the sequence $\{x_m\}_{m=0}^\infty$ because this sequence is dense in the sphere.
For any  $m\ge 0$ there is $i\ge 0$ such that $n_i\le m<n_{i+1}$. If $n_i=m$, then clearly $\sup_j|v_{j}^*(x_m)|=|v_i^*(x_m)|=1> \delta ||x_m||$.
If $n_i<m<n_{i+1}$,  there is $s\le i$ such that $|v_s^*(x_m)|>\delta=\delta ||x_m||$ and thus $\sup_j |v_j^*(x_m)|>\delta ||x_m||$. This yields property (1.c). Finally, we relabel  $\{v_j^*,v_j\}_{j=0}^\infty$ as $\{v^*_{j,k},v_{j,k}\}_{j=0}^\infty$ and finish the proof of Step \ref{semibetasystem}.
\phantom\qedhere
 \end{proof} 

\medskip

\begin{Step}\cite[Lemma 8]{Preiss} \label{1scaling} There is a convex tiling $\{H^k_j\}_{j=0}^\infty$ of $Z_k=X/V_k$ and points $h^k_j\in H^k_j$ with $h^k_0=0$ such that
\begin{enumerate}
\item[(2.a)] $B_{Z_k}(0,1)\subset H_0^k\subset B_{Z_k}(0,\frac{4}{\delta})$,
\item[(2.b)] $||Q_{k,k+1}h^k_j||\le 1-r$,
\item[(2.c)] $B_{Z_k}(h^k_j,r)\subset H^k_j$,
\item[(2.d)] \label{c.4} $||h-h^k_j||\le a+2b+2||Q_{k,k+1}h||$ for every $h\in H^k_j$,
\end{enumerate}
where the positive constanst $\delta$, $r$, $a$ and $b$ are those considered in Lemma \ref{R2} and Step \ref{semibetasystem}.
\end{Step}
\begin{proof} 
  It can be easily checked that  
$Q_{k,k+1}(B_{Z_k}(0,1))= B_{Z_{k+1}}(0,1)$, where $B_{Z_k}(0,1)$ denotes the closed unit ball of $Z_k$. Indeed, on the one hand 
$||Q_{k,k+1}(\widehat{x})||=\dist(x,V_{k+1})\le \dist(x,V_k)= ||\widehat{x}||$
 for all $x\in X$ and thus $Q_{k,k+1}(B_{Z_k}(0,1))\subset B_{Z_{k+1}}(0,1)$. 
On the other hand, clearly $Q_{k,k+1}\circ Q_k=Q_{k+1}$ for all $k\ge 0$ and $Q_k(B(0,1))=B_{Z_{k}}(0,1)$ for all $k\ge 0$.
Therefore, $Q_{k,k+1}(B_{Z_k}(0,1))= B_{Z_{k+1}}(0,1)$ for all $k\ge 0$. 
Also clearly $\ker Q_{k,k+1}=\{\widehat{x}\in Z_k:\, x\in V_{k+1}\setminus V_k\}\cup \{0\}$ so the dimension of $\ker Q_{k,k+1}$ is $1$.

\medskip

Let us define for $j\ge 0$ and $k\ge 0$, the functions
\begin{equation*}
\pi_{j,k}:Z_k\rightarrow \mathbb R^2, \quad \pi_{j,k}(z)=(e^*_{k+1}(z), v^*_{j,k+1}\circ Q_{k,k+1}(z)), \quad \text{ for }
z\in Z_k.
\end{equation*}
Here, we identify isometrically $(Z_{k})^*=V_k^\perp=\{f\in X^*:\,f(x)=0 \text{ for all } x\in V_k\}$. In particular, notice that
$\widehat{e}_{k+1}$ has norm $1$ as a point of $Z_k$  and $e_{k+1}^*$ has norm $1$ as a funcional on $Z_k$.

For every $z\in Z_k$ we select a point $x\in X$ such that $z=\widehat{x}$ and a linear combination
 $\sum_{i=1}^{k+1}\lambda_ie_i\in V_{k+1}$ such that 
\begin{equation*}
\begin{split}
||Q_{k,k+1}(z)||_{Z_{k+1}}&=\dist(x,V_{k+1})=||x-\sum_{i=1}^{k+1}\lambda_ie_i||=||(x-\lambda_{k+1}e_{k+1})-\sum_{i=1}^{k}\lambda_ie_i||\ge \\
&\ge\dist(x-\lambda_{k+1}e_{k+1}, V_k)=||z-\lambda_{k+1}\widehat{e}_{k+1}||_{Z_k}.
\end{split}
\end{equation*}
Then,
\begin{equation}\label{inequalityforc4}
\begin{split}
||z||_{Z_k}&\le ||e_{k+1}^*(z)\widehat{e}_{k+1}||_{Z_k}+||\lambda_{k+1}\widehat{e}_{k+1}-e_{k+1}^*(z)\widehat{e}_{k+1}||_{Z_k}
+||z-\lambda_{k+1}\widehat{e}_{k+1}||_{Z_k}\le \\
 &\le |e_{k+1}^*(z)|+|e_{k+1}^*(z-\lambda_{k+1}\widehat{e}_{k+1})|+ ||Q_{k,k+1}(z)||_{Z_{k+1}}\le \\
&\le |e_{k+1}^*(z)|+||z-\lambda_{k+1}\widehat{e}_{k+1}||_{Z_k}+ ||Q_{k,k+1}(z)||_{Z_{k+1}} \le \\
&\le |e_{k+1}^*(z)|+2||Q_{k,k+1}(z)||_{Z_{k+1}}. 
\end{split}
\end{equation}

Consider the following sets
%
%
%
%
\begin{align*}
T_k&=\{z\in Z_k:\,|e^*_{k+1}(z)|\le 2\},\\
H_0^k&= \bigcap_{j\ge 0}\pi_{j,k}^{-1}(U_0),\\
H_{j}^{k,p}&= \pi_{j,k}^{-1}(U_p) \cap\bigcap_{j'<j}\pi_{j',k}^{-1}(U_0),\quad \text{ for } 1\le p \le 4 \, \text{ and } \, j\ge 0.
\end{align*}

\noindent  where the sets $U_0,U_1,\cdots,U_4$ are the subsets of $\mathbb R^2$ defined in Lemma \ref{R2}. Then, let us check
that the family 
$\{H_0^k,H_{j}^{k,p}:\,j\ge 0,\,p=1,2,3,4\}$ is a tiling of $T_k$ satisfying conditions (2.a)-(2.d) in the statement of  Step \ref{1scaling}:

\begin{enumerate}

\item[{\em Tiling.}] Let us check that  the above family is a convex tiling. Clearly the sets of the family $\{H_0^k,H_{j}^{k,p}:\,j\ge 0,\,p=1,2,3,4\}$ are closed and convex because they are the inverse image by continuous linear functions of closed and convex sets. Also this family covers  $T_k$: Indeed,  Suppose that $z\in T_k$ and that there is 
$j\ge 0$ with  $\pi_{j,k}(z)\notin U_0$ and take the minimum with this property.
Then, $z\in H_{j}^{k,p}$ for some $p\in \{1,2,3,4\}$. Otherwise $\pi_{j,k}(z)\in U_0$ for all $j\ge 0$ and then $z\in H^k_0$.

Also, the sets of the family  $\{H_0^k,H_{j}^{k,p}:\,j\ge 0,\,p=1,2,3,4\}$ have pairwise disjoint interiors. Suppose there is a ball $B_{Z_k}(z,\rho)$ (with $\rho>0$) contained in $H_0^k\cap H_{j}^{k,p}$ for some $j\ge 0 $ and  $p\in\{1,2,3,4\}$. 
Then $\pi_{j,k}(B_{Z_k}(z,\rho))\subset U_0\cap U_p$. Since the continuous linear function $\pi_{j,k}$ is surjective, it is open and thus $\pi_{j,k}(B_{Z_k}(z,\rho))$ is a nonempty open set of $\mathbb R^2$, which is impossible (because the set $U_0\cap U_p$ has empty interior in $\mathbb R^2$).
The proof for any pair of sets  $H_{j}^{k,p}$ and $H_{j'}^{k,p'}$ with $(j,p)\not=(j',p')$ is similar.

\medskip

\item[{\em (2.a)}] 
If $z\in B_{Z_{k}}(0,1)$, then for every $j\ge 0$,
\begin{equation*}
\begin{split}
\max \{|e^*_{k+1}(z)|, &\, |v_{j,k+1}^*\circ Q_{k,k+1}(z)|\}\le 
\max \{||e^*_{k+1}||\,||z||,\,||v_{j,k+1}^*||\,||Q_{k,k+1}||\,||z||\}=||z||\le 1.
\end{split}
\end{equation*}
and thus $\pi_{j,k}(B_{Z_{k}}(0,1))\subset D=[-1,1]\times [-1,1]\subset U_0$ for all $j\ge 0$. This means that 
$$B_{Z_k}(0,1)\subset H_0^k.$$

\medskip

\noindent  For the second inclusion, if $z\in H_0^k$ and $j\ge 0$,
\begin{equation*}
|e^*_{k+1} (z)|+ |v_{j,k+1}^*\circ Q_{k,k+1}(z)|\le 2,
\end{equation*}
and thus 
\begin{equation}\label{squareinequalities}
 \sup_j|v_{j,k+1}^*\circ Q_{k,k+1}(z)|\le 2-|e^*_{k+1} (z)|.
\end{equation}
Also recall that 
\begin{equation*}
 \delta||Q_{k,k+1}(z)||_{Z_{k+1}}\le \sup_j|v_{j,k+1}^*\circ Q_{k,k+1}(z)|.
\end{equation*}
Thus from inequalities \eqref{inequalityforc4} and \eqref{squareinequalities} we get
\begin{equation*}
\begin{split}
||z||_{Z_k}&\le |e_{k+1}^*(z)|+2||Q_{k,k+1}(z)||_{Z_{k+1}}\le 
 |e_{k+1}^*(z)|+ \frac{2}{\delta} \sup_j|v_{j,k+1}^*\circ Q_{k,k+1}(z)| \le \\
&\le |e_{k+1}^*(z)|+ \frac{2}{\delta} (2-|e^*_{k+1} (z)|)=(1-\frac{2}{\delta})|e^*_{k+1} (z)|+
\frac{4}{\delta}\le \frac{4}{\delta}.
\end{split}
\end{equation*}
The last inequality holds because $0<\delta<1$ and then $1-\frac{2}{\delta}<0$.
This yields
\begin{equation*}
H^k_0\subset B_{Z_{k}}(0,\frac{4}{\delta}).
\end{equation*}

\bigskip

\item[{\em (2.b)}] Let us select a point $u_{j,k+1}\in Z_k$ such that 
$Q_{k,k+1}(u_{j,k+1})=v_{j,k+1}$ and $e_{k+1}^*(u_{j,k+1})=0$. This can be done by taking $x\in X$ with
$Q_{k+1}(x)=v_{j,k+1}$. Then $v_{j,k+1}=Q_{k+1}(x)=Q_{k,k+1}(Q_{k}(x))=Q_{k,k+1}(Q_{k}(x+V_{k+1}))=
Q_{k,k+1}(Q_{k}(x)+\lambda \widehat{e}_{k+1})$ for any $\lambda\in \mathbb R$.
Clearly there is $\lambda_0$ such that $e^*_{k+1}(Q_{k}(x)+\lambda_0 \widehat{e}_{k+1})=0$.
Then, we can take $u_{j,k+1}=Q_{k}(x)+\lambda_0 \widehat{e}_{k+1}$.
If we define $h_{j}^{k,1}=a\widehat{e}_{k+1}+bu_{j,k+1}\in Z_{k}$, we have 
\begin{equation*}
||Q_{k,k+1}(h_{j}^{k,1})||=b||v_{j,k+1}||=b\le 1-r.
\end{equation*}

\bigskip

\item[{\em (2.c)}] Notice that $\pi_{j,k}(h_{j}^{k,1})=(a,b)$ for all $j\ge 0$ and thus
\begin{equation*}
\pi_{j,k}(B_{Z_k}(h_{j}^{k,1},r))\subset (a,b)+r([-1,1]\times [-1,1])\subset U_1.
\end{equation*}
Moreover, if $j'<j$, then 
\begin{equation*}
\pi_{j' ,k}(B_{Z_k}(h_{j}^{k,1},r))\subset  (a,\,bv_{j',k+1}^*(v_{j,k+1}))+r([-1,1]\times [-1,1])
\subset U_0
\end{equation*}
because the second coordinate
satisfies $|bv_{j',k+1}^*(v_{j,k+1})|\le b\delta$ (we apply condition (L.c)).
Thus
\begin{equation*}
B_{Z_{k }}(h_{j}^{k,1},r)\subset H_{j}^{k,1}, \quad \text{ for all } j\ge 0.
\end{equation*}

\bigskip

\item[{\em (2.d)}] Set $h^k_0=0$. If $z\in H^k_0$ then by  \eqref{inequalityforc4},

\begin{equation*}
||z||\le |e_{k+1}^*(z)|+2||Q_{k,k+1}(z)||\le 2+2||Q_{k,k+1}(z)||.
\end{equation*}

\noindent Now, if $z\in H_{j}^{k,1}$ then by  \eqref{inequalityforc4},
\begin{align*}
||z-h_{j}^{k,1}||&\le  |e_{k+1}^*(z-h_{j}^{k,1})|+2||Q_{k,k+1}(z-h_{j}^{k,1})||\le \\
&\le a+2||Q_{k,k+1}(h_{j}^{k,1})||+2||Q_{k,k+1}(z)||=\\
&=a+2||bv_{j,k+1}||+2||Q_{k,k+1}(z)||=a+2b+2||Q_{k,k+1}(z)||.
\end{align*}
Since $2\le a+2b$ we get (2.d).

\end{enumerate}

\medskip

\noindent By symmetry, the proof of the above properties (2.a)-(2.c) for $H_{j}^{k,2},\, H_{j}^{k,3}, \,H_{j}^{k,4}$ is similar. 

\bigskip

Finally to get a tiling of the entire space $Z_k$  we consider the additional tiles 
$$\{\widetilde{H}^k_n=4n\widehat{e}_{k+1}+T_k:\, n\in \mathbb Z\setminus \{0\}\}$$
with associated points $\widetilde{h}^k_n:=4n\,\widehat{e}_{k+1}$. Then $Q_{k,k+1}(\widetilde{h}^k_n)=0$,
$B_{Z_k}(\widetilde{h}^k_n,r)\subset \widetilde{H}^k_n$. Also  by  \eqref{inequalityforc4},  $||h-\widetilde{h}^k_n||\le |e_{k+1}^*(h-\widetilde{h}^k_n)|
+2||Q_{k,k+1}(h-\widetilde{h}^k_n)||\le 2+2||Q_{k,k+1}(h)||$ for all $h\in \widetilde{H}^k_n$.

Thus the family $$\{H^k_0,H^{k,p}_j,\widetilde{H}^k_n:\,j\ge 0; \,p=1,2,3,4;\, n\in \mathbb Z\setminus \{0\}\}$$ is a tiling of $Z_k$.
We relabel the tiles of this tiling and their associated points by $\{H^k_j\}$ and $\{h^k_j\}$ respectively, and in such a way that $H^k_0$ remains the same tile. Then, this tiling has conditions (2.a)-(2.d). Notice that $H^k_0$ is a bounded neighborhood of  $0$.
\phantom\qedhere
\end{proof}

\bigskip

\begin{Step} \cite[Theorem 9]{Preiss} \label{tilingwithtubes} There is a tiling $\{C^k_j\}_{j,k}$ of $X$ (where $k>0 \Rightarrow j>0$)  and associated points $\{x^k_j\}_{j,k}\subset
X$ satisfying $$B(x^k_j+V_k,r)\subset C^k_j\subset B(x^k_j+V_k,R)$$
with $x^0_0=0$ and $R:=a+2b+\frac{8}{\delta}$, (for the constants $a,b$, \,$\delta$ and $r$  considered in the previous steps).
\end{Step}
\begin{proof}
Let us consider the tiling $\{H_j^k\}_{j\ge 0}$ of $Z_k$ given in Step  \ref{1scaling} for every $k\ge 0$ and define the sets, where $k>0 \Rightarrow j>0$, as follows  
\begin{equation*}
C^k_j= Q_k^{-1}(H^k_j)\cap \bigcap_{m=k+1}^\infty Q_m^{-1}(H^m_0).
\end{equation*} 
The family $\{C^k_j\}$ satisfies:

\begin{enumerate}
\item[{\em (3.a)}] It is a covering of $X$: for every  $x\in X$  we consider the smallest index $k$ such that $Q_m(x)\in H^m_0$  for all $m>k$.
Notice that $||Q_k(x)||_{Z_k}=\dist(x,V_k) \rightarrow 0$  and thus $x\in B_{Z_k}(0,1)\subset H^k_0$ for $k$ large enough.
If $k=0$ then there is $j\ge 0$ such that $Q_0(x)=x\in H^0_j$ and thus $x\in C^0_j$.
If $k>0$ then  $Q_{k}(x)\notin  H^k_0$ and thus there is $j>0$ such that $Q_k(x)\in H^k_j$. Therefore  $x\in C^k_j$ with $j>0$.

\item[{\em (3.b)}] The sets $C^k_j$ have pairwise disjoint interiors. Indeed, suppose that there is an open ball 
$\overset{\circ}{B}(x,\rho)$
 (with $\rho>0$) such that $\overset{\circ}{B}(x,\rho)\subset C^k_j\cap C^k_{j'}$ for some $j\not=j'$. Then $Q_k(
\overset{\circ}{B}(x,\rho))\subset H^k_{j}\cap
H^k_{j'}$. Since $Q_k$ is continuous, linear and surjective, it is open and thus  $Q_k(\overset{\circ}{B}(x,\rho))$ is a nonempty open set contained 
in  $H^k_{j}\cap
H^k_{j'}$, which is impossible, because $H^k_{j}\cap
H^k_{j'}$ has empty interior. Now, suppose that there is an open  ball $\overset{\circ}{B}(x,\rho)$
 (with $\rho>0$) such that $\overset{\circ}{B}(x,\rho)\subset C^{k}_j\cap C^{k'}_{j'}$ for some $k<k'$. Then, $Q_k(
\overset{\circ}{B}(x,\rho))\subset
H^k_j$ and $Q_m(\overset{\circ}{B}(x,\rho))\subset H^m_0$ for all $m>k$. In particular, $Q_{k'}(\overset{\circ}{B}(x,\rho))\subset H^{k'}_0$. Since $0\le k<k'$, we have $k'>0$ and thus the associated $j'>0$. Also $Q_{k'}(\overset{\circ}{B}(x,\rho))\subset H^{k'}_{j'}$ and thus 
the nonempty open set $Q_{k'}(\overset{\circ}{B}(x,\rho))\subset H^{k'}_0 \cap H^{k'}_{j'}$, which is impossible because 
$H^{k'}_0 \cap H^{k'}_{j'}$ has empty interior.

\item[{\em (3.c)}] Select points $x^k_j \in X $ such that $x^k_j+V_k=Q_k^{-1}h^k_j$. Notice that in particular we can choose
 $x^0_0=0$.
 Let us check that
\begin{equation*}
B(x^k_j+V_k,r)\subset C^k_j\subset B(x^k_j+V_k,R).
\end{equation*}

\noindent Firstly, From condition {\em (2.d)} we have $B_{Z_k}(h^k_j,r)\subset H^k_j$ and thus 
\begin{equation*}
\begin{split}
Q_k^{-1}(B_{Z_k}(h^k_j,r))&=\{x\in X:\, \dist(x-x^k_j,V_k)\le r\}=\\
&=x^k_j+V_k+B(0,r)=
B(x^k_j+V_k,r)\subset 
Q_k^{-1}( H^k_j).
\end{split}
\end{equation*}
Moreover, from condition {\em (2.b)} and for $m>k$ we have that
 $||Q_m(x^k_j)||\le ||Q_{k+1}(x^k_j)||=||Q_{k,k+1}(h_{j}^k)||\le 1-r$. Therefore
\begin{equation*}
Q_m(x^k_j+V_k+B(0,r))=Q_m(x^k_j)+B_{Z_m}(0,r)\subset B_{Z_m}(0,1)\subset H^m_0.
\end{equation*}
Thus $x^k_j+V_k+B(0,r) \subset Q_m^{-1}(H^m_0)$. This yields  $B(x^k_j+V_k,r)\subset C^k_j$.

\medskip

\noindent Secondly, if $x \in C^k_j$ then $Q_{k+1}(x)\in H^{k+1}_0$ and by condition {\em (2.a)} we have
 $||Q_{k+1}(x)||\le\frac{4}{\delta}$. Since $Q_k(x)\in H^k_j$ and applying condition {\em (2.d)} we have that 
$||Q_k(x-x^k_j)|| \le a+2b+2||Q_{k+1}(x)||\le a+2b+2 \frac{4}{\delta}=a+2b+\frac{8}{\delta}$. This yields
\begin{equation*}
 C^k_j\subset B(Q_k^{-1}h^k_j,R)=B(x^k_j+V_k,R)=x^k_j+V_k+B(0,R) \quad \text{ with }R=a+2b+\frac{8}{\delta}.
\end{equation*}
\end{enumerate}
\phantom\qedhere
\end{proof}

\medskip

 \begin{Step} \label{refinement} There is a tiling $\{C_{i,j}^k\}_{i,j,k}$ of $X$ (where ``$k>0 \Rightarrow j>0$'' and ``$k=0\Rightarrow
i=0$'') and associated  points $\{d^{k}_i+x^k_j\}_{i,j,k}\subset X$ satisfying
\begin{equation*}
B(d^{k}_i+x^k_j,r)\subset C^k_{i,j}\subset B(d^{k}_i+x^k_j, R'),
\end{equation*}
with $d^{0}_0=x^0_0=0$ and $R'=a+2b+\frac{8}{\delta}+2r$, (for the constants $a,b$, \,$\delta$ and $r$  considered in the previous steps).
\end{Step}
\begin{proof} 
In the  last step of the proof of Theorem \ref{separablecase} we are going to define a refinement of the  tiling  
of $X$ obtained in Step \ref{tilingwithtubes}  by tiling each  tile.
 Note that each tile is a sort of  ``cylinder'' around  a  finite 
dimensional subspace $V_k$. In order to do that, first consider a family $\{d^{k}_i\}_i\subset V_{k}$
being maximal in  $V_{k}$ with respect to the property $||d^{k}_i-d^{k}_{i'}||\ge 2r$ for $i\not=i'$. Notice that for $k=0$ we have 
$V_0=\{0\}$ and only one point $d^0_i$ which is $d^0_0=0$. For $k>0$, $i$ runs over $\mathbb N\cup\{0\}$. For every $k\ge 0$ fixed, consider the ``modified
Voronoi cells'' $\{D_i^{k}\}_i$
in $X$ associated with the set $\{d^{k}_i\}_i$ and defined by

\begin{equation*}
\begin{split}
V_i^{k}&=\{x\in X:\,||x-d^{k}_i||\le ||x-d^{k}_{i'}||\text{ for all } i'\},\\
D_i^{k}&=\overline{(V_i^k\setminus \bigcup_{i'<i}V_{i'}^k)}\subset V_{i}^{k}.
\end{split}
\end{equation*}

\bigskip

\begin{enumerate}

\item[{\em (4.a)}] 
 For every $k$ fixed, the family $\{D_i^{k}\}_i$ is a tiling of $X$.  The proof is similar to the one  given in \cite[Lemma 2.2]{DevilleGarciaBravo}) and \cite[Theorem 3.1]{Klee2}. For $k=0$, there is just one tile $D^0_0=X$. For $k>0$, since the family $\{d^k_i\}_i\subset V_k$ is $2r$-separated and $V_k$ is finite dimensional, the limit $\lim_{i\to \infty}||d^k_i||=\infty$ and thus for every $x\in X$  
the $\inf_i||x-d^k_i||$ is attained. Then $x\in D^k_{i}$ being   $i$ the first $i$ where the infimun is attained.
Thus, the family $\{D^k_i\}_i$ covers $X$.
Also, the sets of the family $\{D_i^{k}\}_i$ have pairwise disjoint interiors. Indeed, if $i'<i$ and there is an open ball 
$\overset{\circ}{B}(x,\rho)\subset D_i^{k}\cap D_{i'}^{k}$ (with $\rho>0$). 
Then $\overset{\circ}{B}(x,\rho)\subset D_{i'}^k\subset V_{i'}^k$
and  $\overset{\circ}{B}(x,\rho)\subset D_{i}^k\subset \overline{V_{i}^k\setminus V_{i'}^k}$. Thus, 
$V^k_i\setminus \overset{\circ}{B}(x,\rho)\supset V_i^k\setminus V_{i'}^k$. Since the set 
$V^k_i\setminus \overset{\circ}{B}(x,\rho)$ is closed we have 
$V^k_i\setminus \overset{\circ}{B}(x,\rho)\supset \overline{ V_i^k\setminus V_{i'}^k}$. This is a contradiction with the fact that $\overset{\circ}{B}(x,\rho)\subset \overline{V_{i}^k\setminus V_{i'}^k}$.

\medskip

\item[{\em (4.b)}] In addition, let us see that the sets of the family $\{D_i^{k}\}_i$ are starlike. In fact, we will check that $D_i^{k}$ is starlike   with respect to $d^{k}_i$. Since the closure of a starlike set is starlike, it is enough to check that
 $V_i^k\setminus \bigcup_{i'<i}V_{i'}^k$ is starlike. If $x\in V_i^k\setminus \bigcup_{i'<i}V_{i'}^k$, then
\begin{equation*}
||x-d^k_i||\le ||x-d^k_{i'}|| \quad \text{ for all } i'  \quad \text{ and } \quad ||x-d^k_i||< ||x-d^k_{i'}|| \quad \text{ for }\quad
 i'<i. 
\end{equation*}
For all $t\in[0,1]$ we have that $x_t=tx+(1-t)d^k_i$ satisfies
\begin{equation*}
||x_t-d^k_i||=||x-d^k_i||-||x-x_t|| \quad \text{ and } \quad ||x-d^k_{i'}|| \le ||x-x_t||+||x_t-d^k_{i'}|| \quad \text{ for all } \quad i'.
\end{equation*}
Thus,
\begin{align*}
||x_t-d^k_i||&=||x-d^k_i||-||x-x_t|| \le ||x-d^k_i|| +||x_t-d^k_{i'}||-||x-d^k_{i'}|| \le ||x_t-d^k_{i'}|| \text{ for all } i',\\
||x_t-d^k_i||&=||x-d^k_i||-||x-x_t|| \le ||x-d^k_i|| +||x_t-d^k_{i'}||-||x-d^k_{i'}|| < ||x_t-d^k_{i'}|| \text{ for } i'<i.
\end{align*}
Thus  $x_t\in V_i^k\setminus \bigcup_{i'<i}V_{i'}^k$.

\medskip

Also,  it is worth noting that in the case that the norm is strictly convex we could consider the sets $V_i^k$
in place of the sets $D_i^k$ because the sets $V_i^k$ have pairwise disjoint interior (\cite[Theorem 3.1]{Klee2}). 
Also  if the norm is
locally uniformly rotund then the sets 
 $D_i^{k}$ coincide with the sets $V_i^{k}$ because for any fixed $k$ the sets of the family $\{V_i^{k}\}_i$ have pairwise 
 disjoint interiors and each $V_i^{k}$
is the closure  of its interior (\cite[Theorem 3.1]{Klee2}).

\medskip

\item[{\em (4.c)}] Moreover, $B(d_i^{k},r)\subset D_i^{k}$. Indeed, if $x\in X$ and $||x-d_i^{k}||< r$ then for $i'\not=i$
we have 
$$||x-d^k_{i'}||=||(d^k_i-d^k_{i'})+(x-d^k_i)||\ge ||d^k_i-d^k_{i'}||-||x-d^k_i||> 2r-r=r> ||x-d_i^{k}||$$ 
and 
$$x\in V^k_i\setminus \bigcup_{i'\not= i}V^k_{i'}\subset V^k_i\setminus \bigcup_{i'<i}V^k_{i'}. $$
Thus
the closed  ball $$B(d^k_i,r)\subset \overline{V^k_i\setminus \bigcup_{i'<i}V^k_{i'}}=D^k_i.$$

\medskip

\item[{\em (4.d)}] Take the intersection  $C_{i,j}^k=(x^k_j+D_i^{k})\cap C^k_j$  and the family $\{C_{i,j}^k\}_{i,j,k}$
(where ``$k>0\Rightarrow j>0$'' and ``$k=0 \Rightarrow i=0$''). Let us see some of the properties of this family:

\medskip

\noindent $\star$ The family $\{C_{i,j}^k\}_{i,j,k}$ covers $X$: if $x\in X$, then there is  $C^k_j$ such that $x\in C^k_j$ (because
$\{C^k_j\}_{j,k}$ is a covering of $X$. Now, for $k,j$ fixed, $\{(x^k_j+D_i^{k})\cap C^k_j\}_i$ is a covering of $C^k_j$ and thus
there is $i$ such that $x\in (x^k_j+D_i^{k})\cap C^k_j$.

\medskip

 \noindent $\star$ Each set $C_{i,j}^{k}$ is starlike with respect to $x^k_j+d_i^{k}$:
  Suppose that $x\in C_{i,j}^{k}$.
Since  $C_{j}^{k}$ is convex,  $x\in C_{j}^{k}$ and $x^k_j+d_i^{k}\in x^k_j+V_k\subset C_{j}^{k}$, we have that  
$t (x^k_j+d_i^{k})+(1-t)x\in C_j^k$ for every $0\le t\le 1$.

\noindent  Also, the translated set $x^k_j+D_i^{k}$ is starlike with respect to $x^k_j+d_i^{k}$ and thus 
$t (x^k_j+d_i^{k})+(1-t)x\in x^k_j+D_i^{k}$ for all $0\le t\le 1$.
Therefore  $t (x^k_j+d_i^{k})+(1-t)x\in C_{i,j}^k$ for all $0\le t\le 1$.

\bigskip

\noindent $\star$  ${B(x^k_j+d_i^{k},r)\subset C_{i,j}^k}$. Indeed, from Step \ref{tilingwithtubes} we get
$$B(x^k_j+d_i^{k},r)=x^k_j+d_i^{k}+B(0,r)\subset x^k_j+V_k+B(0,r)\subset C^k_j.$$

\noindent  Also, from (4.c) we get $$B(x^k_j+d_i^{k},r)=x^k_j+B(d_i^{k},r)\subset x^k_j+D_i^k.$$

\bigskip

\noindent $\star$ ${C^k_{i,j}\subset B(x^k_j+d_i^{k},R+2r)}$. Indeed, suppose  there is $x\in C_{i,j}^k$ satisfying that $||x-(x^k_j+d_i^{k})||> R+2r$. 
Since $x\in C^k_j\subset x^k_j+V_k+B(0,R)$, there is a vector  $v\in V_{k}$  such that 
$||x-(x^k_j+v)||\le R$. For this $v\in V_k$ there is $d_{i'}^k\in V_k$ such that $||v-d_{i'}^k||\le 2r$. Now,

\begin{equation*}
\begin{split}
||(x-x^k_j)-d_{i'}^{k}||&=||x-x^k_j-d_{i'}^{k}||= ||x-x^k_j-v+v-d_{i'}^{k}||\le ||x-x^k_j-v||+||v-d_{i'}^{k}||\le R+2r<\\
&<||x-(x^k_j+d_{i}^{k})||=||(x-x^k_j)-d_{i}^{k}||.
\end{split}
\end{equation*}

\medskip

\noindent Therefore $x-x^k_j\notin V_i^k$ and thus $x-x^k_j\notin D_i^k$, i.e. $x\notin x^k_j+ D_i^k$ and
 $x\notin (x^k_j+ D_i^k)\cap C^k_j=C^k_{i,j}$, which is not true.

\bigskip

\noindent \noindent $\star$ The sets of the family $\{C_{i,j}^{k}\}_{i,j,k}$ have pairwise disjoint interiors. Indeed, this assertion follows from the fact that every $k\ge 0$ fixed,
the  sets $\{{D}_i^{k}\}_i$  have pairwise disjoint interiors and the sets $\{{C}_j^{k}\}_{j,k}$ have pairwise disjoint interiors.
Suppose there is a ball $B(x,s)\subset C_{i,j}^{k}\cap C_{i',j'}^{k'}$ with $x\in X$ and $s>0$.

\medskip

\noindent  (a) If $(j,k)\not=(j',k')$, 
then $B(x,s)\subset
C^k_{j}\cap C^{k'}_{j'}$ and thus  the interior of $C^k_{j}\cap C^{k'}_{j'}\not=\emptyset$, which is not true because of (3.b).

\medskip

\noindent (b) If $(j,k)=(j',k')$ and $i\not=i'$ then $B(x,s)\subset (x^k_j+D_i^k)\cap (x^k_j+D_{i'}^k)=x^k_j+D_i^k\cap D_{i'}^k$ and thus the interior of
$D_i^k\cap D_{i'}^k\not=\emptyset$, which is not true because of  (4.a).

\end{enumerate}

\noindent Notice that the constant of normality $K$  can be chosen any number 
$\displaystyle{K>\frac{R+2r}{r}=2+\frac{1}{r}(a+2b+\frac{8}{\delta})}.$
This finishes the proof of Theorem \ref{separablecase}.  
\end{proof} 

\bigskip

\begin{Rem}\label{k=0}
Note that in the proof of Theorem \ref{separablecase}, for $k=0$ the vector space $V_0=\{0\}$ and the sets $C^0_j$ verify that $B(x^0_j,r)\subset C_{j}^0\subset B(x^0_j,R)$.
Also, there is just one $d^0_i$ which is $d^0_0=0$.
Therefore the tiles $C_{j}^0$ of Step \ref{tilingwithtubes} remain the same after the refinement given in Step \ref{refinement} and thus, the tiles
$C_{0,j}^0=C^0_j$ are convex and satisfy  $B(x^0_j,r)\subset C_{0,j}^0=C_{j}^0 \subset B(x^0_j,R)$. In particular, since $x_0^0=0$ we get
that $B(0,r)\subset C^0_{0,0}=C^0_{0}=B(0,R)$ and thus the ``first tile'' $C^0_{0,0}$ is a bounded convex neighborhood of $0$.
\end{Rem}

\medskip

\begin{Rem}\label{constantbound}
Let us give some estimations of the constant  of normality $K$. For example, for $a=1.3$, $b=0.9$, $r=0.1$ and $\delta= \frac{5}{9}$
we get $\frac{R+2r}{r}=177$. Thus, we get a constant of normality $K>177$. For other choices of $U_0$ in Lemma \ref{R2} it is possible to obtain better constants of normality. For example, let us redefine $U_0$ as 
$U_0=\{(x,y)\in \mathbb R^2:\, |x|+2|y|\le 3,\,\, |x|\le 2\}$ 
and  modify conveniently the sets $U_i$ for $i=1,2,3,4$. That allows to have $H^k_0\subset B(0,\frac{3}{\delta})$ for all $k$,
which yields to a constant $R=a+2b+\frac{6}{\delta}$. 
 Now, for example for $a=1.8$, $b=0.8$, $r=\frac{0.4}{3}$ and $\delta=0.5$, we get 
$\frac{R+2r}{r}=117.5$ and thus a constant of normality $K>117.5$.
\end{Rem}

\medskip

In the next Corollary, instead of  using ``modified Voronoi cells'' to slice every ``cylinder'' $C^k_j$ of Theorem \ref{separablecase}
we could fix a natural number $N$  and use the  linear projections $P_k:X\rightarrow V_k$, \, $P_k(x)=\sum_{m=1}^ke_m^*(x)e_m$ for $x\in X$ and $0<k\le N$  to slice the tiles $C^k_j$ for $0<k\le N$ and all $j$. For $k=0$ and $k>N$ we use the 
``modified Voronoi cells'' as before. Since the constants $R_N:=\sup_{0<k\le N}||P_k||<\infty$, the refinement of the tiling
 $\{C^k_j\}$ would produce a normal tiling $\{\widetilde{C}^k_{i,j}\}$ defined in the following propositon.

\begin{Pro} \label{separableconvex} Let $(X,||\cdot||)$ be a separable infinite dimensional Banach space. Following the notation in the proof of Theorem \ref{separablecase}, consider for any fixed natural number  $N$  the linear projections \linebreak
$P_k:X\rightarrow V_k$, $P_k(x)=\sum_{m=1}^ke_m^*(x)e_m$ for  $0<k\le N$  and  all $x\in X$. 
Then, there is a tiling $\{\widetilde{C}^k_{i,j}\}_{i,j,k}$ of $X$ (where ``$k>0 \Rightarrow j>0$''
and ``$k=0\Rightarrow i=0$'') so that:

\begin{enumerate}
\item[{ (a)}] The tiles $\widetilde{C}^k_{i,j}$ are convex for $k\le N$ and all $i,j$. In particular for $0<k\le N$,
$\widetilde{C}^k_{i,j}=P_{k}^{-1}(\widetilde{D}_i^k)\cap C^k_j$
where $\{\widetilde{D}_i^k\}_i$ is a normal and convex tiling of $V_k$. Also, $\widetilde{C}^0_{0,j}=C^0_j$ for all $j$ and   the tile $C^0_{0,0}=C^0_0$ is a bounded and convex neighborhood of $0$.

\item[{ (b)}] The tiles $\widetilde{C}^k_{i,j}=C^k_{i,j}=(x^k_j+D^k_i)\cap C^k_j$ are starlike for $k> N$ and all $i,j$,
where the points $\{x^k_j\}_{j,k}$ are defined in Step \ref{tilingwithtubes} (of Theorem \ref{separablecase}) and the normal and starlike tiling $\{D_i^k\}_i$ of $V_k$ is defined in Step \ref{refinement} (of Theorem \ref{separablecase}).

\item[{ (c)}] The tiling  $\{\widetilde{C}^k_{i,j}\}_{i,j,k}$ of $X$ is normal.
\end{enumerate}
\end{Pro}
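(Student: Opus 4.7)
The plan is to verify the three conclusions by treating the cases $k=0$, $0<k\le N$, and $k>N$ separately, with only the middle range requiring genuinely new work. For $k=0$ take $\widetilde{C}^0_{0,j}=C^0_j$ from Step \ref{tilingwithtubes}, and for $k>N$ take $\widetilde{C}^k_{i,j}=C^k_{i,j}=(x^k_j+D_i^k)\cap C^k_j$ as in Step \ref{refinement}. For $0<k\le N$, the subspace $V_k$ is finite dimensional, so the Banach--Mazur argument recalled in the introduction supplies a $2\sqrt{k}$-normal convex tiling of $V_k$; after rescaling, assume this tiling $\{\widetilde{D}_i^k\}_i$ has inner radius $r_0>0$ fixed and outer radius at most $2\sqrt{N}\,r_0$, with centers $\widetilde{x}_i^k\in V_k$ satisfying $B_{V_k}(\widetilde{x}_i^k,r_0)\subset \widetilde{D}_i^k \subset B_{V_k}(\widetilde{x}_i^k,2\sqrt{N}\,r_0)$. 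Then set
\[\widetilde{C}^k_{i,j}=P_k^{-1}(\widetilde{D}_i^k)\cap C^k_j,\qquad y^k_{i,j}=\bigl(x^k_j-P_k(x^k_j)\bigr)+\widetilde{x}_i^k,\]
so that $y^k_{i,j}-x^k_j\in V_k$ and $P_k(y^k_{i,j})=\widetilde{x}_i^k$ (since $P_k^2=P_k$ and $\widetilde{x}_i^k\in V_k$).

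Covering and pairwise disjointness of interiors follow along the lines of Steps \ref{tilingwithtubes} and \ref{refinement}: within a fixed $C^k_j$ with $0<k\le N$ the disjoint interior property of the $P_k^{-1}(\widetilde{D}_i^k)\cap C^k_j$ comes from openness of the linear surjection $P_k\colon X\to V_k$ combined with the disjoint interior property of $\{\widetilde{D}_i^k\}_i$ in $V_k$; across different $(j,k)$ it reduces to Step \ref{tilingwithtubes}. Convexity for $k\le N$ is then immediate since $P_k^{-1}(\widetilde{D}_i^k)$ is the preimage of a convex set under a linear map and $C^k_j$ is convex by Step \ref{tilingwithtubes}; starlikeness for $k>N$ is inherited from Step \ref{refinement}.

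The heart of the argument is the uniform normality estimate for $0<k\le N$. Let $R_N:=\max_{0<k\le N}||P_k||<\infty$ and $r':=\min(r,\,r_0/R_N)$. For the inner ball, if $||x-y^k_{i,j}||\le r'$, then since $y^k_{i,j}-x^k_j\in V_k$ we have $\dist(x-x^k_j,V_k)\le r'\le r$, so $x\in B(x^k_j+V_k,r)\subset C^k_j$ by Step \ref{tilingwithtubes}; moreover $||P_k(x)-\widetilde{x}_i^k||=||P_k(x-y^k_{i,j})||\le R_N r'\le r_0$, giving $P_k(x)\in\widetilde{D}_i^k$. Hence $B(y^k_{i,j},r')\subset\widetilde{C}^k_{i,j}$. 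For the outer ball, given $x\in\widetilde{C}^k_{i,j}$ choose $v_0\in V_k$ with $||x-x^k_j-v_0||\le R$ (from $C^k_j\subset B(x^k_j+V_k,R)$); then $v_0+P_k(x^k_j)-\widetilde{x}_i^k\in V_k$ and
\[v_0+P_k(x^k_j)-\widetilde{x}_i^k=P_k(v_0+x^k_j-x)+\bigl(P_k(x)-\widetilde{x}_i^k\bigr),\]
so its norm is at most $R_N R+2\sqrt{N}\,r_0$; together with the triangle inequality this yields $||x-y^k_{i,j}||\le R(1+R_N)+2\sqrt{N}\,r_0$. Combined with the pairs $(r,R)$ for $k=0$ and $(r,R+2r)$ for $k>N$ from Steps \ref{tilingwithtubes} and \ref{refinement}, taking minima of inner radii and maxima of outer radii produces uniform constants and proves normality of the global tiling.

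The principal obstacle is this outer estimate in the middle regime: one must split $x-y^k_{i,j}$ into a part controlled by the $C^k_j$-geometry (bounded by $R$) and a $V_k$-part that has to be bounded both by the outer radius of $\widetilde{D}_i^k$ and by $||P_k||$. The identity $P_k(y^k_{i,j})=\widetilde{x}_i^k$, enforced by the subtraction of $P_k(x^k_j)$ in the definition of $y^k_{i,j}$, is exactly what makes this decomposition work and keeps the estimate uniform in $k$.
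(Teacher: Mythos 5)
Your proposal is correct and follows essentially the same route as the paper: the same definition $\widetilde{C}^k_{i,j}=P_k^{-1}(\widetilde{D}_i^k)\cap C^k_j$, the same inner-ball argument via $||P_k||$, and the same outer-ball decomposition splitting $x-y^k_{i,j}$ into a piece bounded by $R$ and a $V_k$-piece written as $P_k(v+x^k_j-x)+(P_k(x)-\widetilde{x}^k_i)$. The only cosmetic difference is that the paper normalizes $x^k_j\in\ker P_k$ whereas you keep $x^k_j$ arbitrary and subtract $P_k(x^k_j)$ from the center, which is an equivalent bookkeeping choice.
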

\begin{proof}
 (a) Let us consider the tiling $\{C^k_j\}_{j,k}$ of $X$ constructed in Step \ref{tilingwithtubes} in the proof of Theorem \ref{separablecase} and follow  the notation of that proof. Define $\widetilde{C}^k_{i,j}=P_{k}^{-1}(\widetilde{D}_i^k)\cap C^k_j$  
  for $0<k\le N$ and all $i,j$ 
where $\{\widetilde{D}_i^k\}_i$ is a normal and convex tiling of $V_k$ such that $B_{V_k}(\widetilde{d}^k_i,r)\subset
\widetilde{D}_i^k \subset B_{V_k}(\widetilde{d}^k_i,r_k)$ for some $r_k\ge r$ and suitable points $\widetilde{d}_i^k\in V_k$. Since $\{e_m\}_m$ is a fundamental biorthogonal system of $X$, we have that $X=[\{e_m\}_{m=1}^k]\oplus [\{e_m\}_{m=k+1}^\infty]$, where
$[H]$ denotes the closed linear span of the set $H$. Thus,  can assume that the points $x^k_j$  in the proof of the Theorem 
\ref{separablecase} have been choosen so that $x^k_j\in [\{e_i:\,i>k\}]$ for $0<k\le N$. Now if  $x\in  \widetilde{C}^k_{i,j}$, let us check that
\begin{equation*}
B(x^k_j+\widetilde{d}^k_i,\frac{r}{||P_k||})\subset \widetilde{C}^k_{i,j}\subset B(x^k_j+\widetilde{d}^k_i,\,R(1+||P_k||)+r_k).
\end{equation*}
Recall that $R=a+2b+\frac{8}{\delta}$, where $a,b,\delta$ are the positive constants chosen in Lemma \ref{R2}.
Firstly, take $x\in  \widetilde{C}^k_{i,j}\subset C^k_j\subset x^k_j+V_k+B(0,R)$ and  $v\in V_k $ such that $||x-x^k_j-v||=\dist(x-x^k_j, V_k)\le R$. Then,
\begin{align*}
||x-(x^k_j+\widetilde{d}^k_i)||&=||(x-x^k_j-v)+ (v-\widetilde{d}^k_i)|| \le  R+||(v-P_k(x))+(P_k(x)-\widetilde{d}^k_i)||\le \\
& R+ ||P_k(v-x)||+r_k = R+ ||P_k(v-x+x^k_j)||+r_k\le \\ & \le   R+||P_k||R+2r_k\le R(1+||P_k||)+r_k.
\end{align*}

\medskip

Secondly, if $x\in B(x^k_j+\widetilde{d}^k_i,\frac{r}{||P_k||})$, then $P_k(x)\in B_{V_k}(\widetilde{d}^k_i,r)$ and 
$Q_k(x)\in B_{Z_k}(\widehat{x}^k_j,\frac{r}{||P_k||})\subset B_{Z_k}(\widehat{x}^k_j,r)$. Thus, 
$P_k(x)\in \widetilde{D}^k_i$ and
$x\in B(x^k_j+V_k,r)\subset C^k_j$. This yields $x\in \widetilde{C}_{i,j}^k$.
Therefore if $t_N:=\sup \{r_k:\,0<k\le N\}$ and $R_N:=\sup_{0<k\le N}||P_k||<\infty$ we have
\begin{equation}\label{normal1} 
B(x^k_j+\widetilde{d}^k_i,\frac{r}{R_N})\subset \widetilde{C}^k_{i,j}\subset B(x^k_j+\widetilde{d}^k_i,\,R(1+R_N)+t_N)
\quad  \text{ for } \quad  0< k\le N.
\end{equation}

\medskip

 For $k=0$ and $k>N$, the sets $\widetilde{C}^k_{i,j}$ are the sets ${C}^k_{i,j}$ defined in the proof of Theorem 
\ref{separablecase}. Also, the points $x^k_j$ and ${d}^k_i$ are the points defined in the proof of Theorem 
\ref{separablecase}. Recall that they satisfy
\begin{equation}\label{normal2} 
B(x^k_j+d^k_i,r)\subset \widetilde{C}^k_{i,j}\subset B(x^k_j+{d}^k_i,\,R+2r)
\quad  \text{ for } \quad  k=0 \text{ and } k> N.
\end{equation}

\medskip 

The fact that $\{\widetilde{C}^k_{i,j}\}_{i,j,k}$ is a tiling of $X$ can be proved as in Theorem \ref{separablecase}.
Assertion (b) is a direct consequence of the proof of Theorem \ref{separablecase}.
Because of the fact that $\displaystyle{\frac{r}{R_N}\le r < R+2r<R+2 < R+R\le R(1+R_N)< R(1+R_N) +t_N }$ 
and the inclusions in \eqref{normal1} and \eqref{normal2}, it follows  that the tiling $\{\widetilde{C}^k_{i,j}\}_{i,j,k}$
is $K$-normal for any constant $\displaystyle{K>\frac{R_N}{r}(R(1+R_N) +t_N)}$  and  assertion (c) is proved.
\end{proof} 

\medskip

\begin{Rem} A result of M.I. Kadec and M.G. Snobar \cite[pg. 243 and pg. 320]{BST} establishes that for any $n$-dimensional space $V$ and any Banach space $X$ containing $V$ the constant $$\lambda(V,X)=\inf\{ ||P|| : \, P:X\rightarrow V \text{ is a linear projection 
 from } X \text{ to } V\} \le \sqrt{n}.$$
Thus,  replacing the projections $\{P_k:X\rightarrow V_k\}_{k=1}^N$
 with suitable projections $\{\widetilde{P}_k:X\rightarrow V_k\}_{k=1}^N$ in Proposition \ref{separableconvex},
we can get  
$R_N$  as close  as needed to $\sqrt{N}$.
Also recall that in the introduction it was mentioned that any $n$-dimensional space $V$ has a $2\sqrt{n}$-normal convex  tiling and thus  the constant $t_N$ in the proof of Proposition \ref{separableconvex} can be taken to satisfy $t_N\le 2\sqrt{N}$.
  In this case (by modifying the  points $x^k_j$ for $0<k\le N$ to be in $\ker \widetilde{P}_k$) a coarse estimation for the constant of normality 
 $K>\frac{R_N}{r}(R(1+R_N) +t_N)$ given in the proof of Proposition \ref{separableconvex} will be  
$K>\frac{1}{r}[R(N+\sqrt{N}) +2N]$ with $R=a+2b+\frac{8}{\delta}$.
\end{Rem}

Recall  that in any finite dimensional space $(X,||\cdot||)$ there is a $2$-normal and starlike tiling  \cite{DevilleGarciaBravo}. Recall also that it is an open problem whether there is $S>0$ such that every finite dimensional Banach space $(X,||\cdot||)$
has $S$-normal and convex tilings. We finish this section with a review of the  steps \ref{1scaling} to \ref{refinement} in the proof of Theorem \ref{separablecase} for the finite dimensional setting. We adapt it to  get $K$-normal tilings such that some  tiles are convex and the rest of them are starlike being  $K=2+\frac{1}{r}(a+2b+\frac{8}{\delta})$  independently of the dimension of the space.

 \begin{Pro} \label{Ndim} For every space $(X,||\cdot||)$ of dimension $M$  there is a $K$-normal starlike  tiling $\{C^{k}_{i,j}\}_{i,j,k}$ (indexed with  $0\le k\le M-1$, where ``$k>0 \Rightarrow j>0$'' and ``$k=0\Rightarrow i=0$'') such that
\begin{enumerate}
\item[(a)] the tiles $\{C^{0}_{0,j}\}_j$ are convex,\, 
\item[(b)]  the tile $C^{0}_{0,0}$ is a convex neighborhood of $0$,\, 
\item[(c)] the constant $K$ can be taken as $K=2+\frac{1}{r}(a+2b+\frac{8}{\delta})$ independently of $M$ (the constants $a,b, 
r,\delta$ are given by Lemma \ref{R2} in the proof of Theorem \ref{separablecase}).
\end{enumerate}
\end{Pro}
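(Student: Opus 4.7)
The plan is to run the construction in the proof of Theorem \ref{separablecase} with the filtration of subspaces truncated at $V_{M-1}$, replacing the asymptotic ``shrinking to zero'' of $\|Q_k(x)\|$ (which is what forces the construction to terminate in the separable infinite-dimensional case) by a direct \emph{ad hoc} tiling at the last level $k=M-1$. I fix a biorthogonal basis $\{e_i,e_i^*\}_{i=1}^M$ of $X$, renormed so that $\|e_i\|=\|e_i^*\|=1$ (as in the beginning of the proof of Theorem \ref{separablecase}, at the cost of a factor $1+\varepsilon$ absorbed in the renorming), and set $V_k=[\{e_1,\ldots,e_k\}]$ for $0\le k\le M-1$ and $Z_k=X/V_k$.

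For every $0\le k\le M-2$, the quotient $Z_{k+1}$ is a nontrivial finite-dimensional space, so Step \ref{semibetasystem} still supplies a (now finite) family $\{v^*_{j,k+1},v_{j,k+1}\}_j$ satisfying (1.a)-(1.c), and Step \ref{1scaling} then delivers verbatim a convex tiling $\{H^k_j\}_j$ of $Z_k$ with associated points $h^k_j$ fulfilling (2.a)-(2.d); only the index ranges $j$ inside $H^{k,p}_j$ and $n$ inside $\widetilde H^k_n$ become finite. At the exceptional level $k=M-1$, the quotient $Z_{M-1}$ is one-dimensional with generator $\widehat e_M$ of norm one, so I tile it directly by the intervals $H^{M-1}_n=\widehat e_M\cdot[2n-1,2n+1]$, $n\in\mathbb Z$, with base points $h^{M-1}_n=2n\,\widehat e_M$; this gives $B_{Z_{M-1}}(h^{M-1}_n,r)\subset H^{M-1}_n\subset B_{Z_{M-1}}(h^{M-1}_n,1)\subset B_{Z_{M-1}}(h^{M-1}_n,4/\delta)$ and (2.b), (2.d) hold vacuously (since $Z_M=\{0\}$) respectively with $R=a+2b+8/\delta>1$.

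With these building blocks in place, Steps \ref{tilingwithtubes} and \ref{refinement} carry through unchanged in the truncated form
\[
C^k_j=Q_k^{-1}(H^k_j)\cap\bigcap_{m=k+1}^{M-1}Q_m^{-1}(H^m_0),\qquad C^k_{i,j}=(x^k_j+D^k_i)\cap C^k_j,
\]
yielding starlike tiles with $B(x^k_j+d^k_i,r)\subset C^k_{i,j}\subset B(x^k_j+d^k_i,R+2r)$ for $0\le k\le M-1$. For $k=0$ the only Voronoi centre is $d^0_0=0$, so $C^0_{0,j}=C^0_j$ is convex, which gives assertions (a) and (b); the final normality constant is $K=2+\frac1r(a+2b+\frac8\delta)$, independent of $M$, establishing (c). The only point that requires care, and is the principal obstacle, is ensuring that the \emph{ad hoc} level $k=M-1$ reuses the \emph{same} $r$ and $R$ as Lemma \ref{R2}: this is why the intervals are chosen of half-length $1$ (so that $r<1\le R$ make both (2.a) and the ball inclusions of Step \ref{tilingwithtubes} go through with the same constants), since any other scaling would let the resulting normality constant drift with the dimension $M$.
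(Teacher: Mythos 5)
Your proposal is correct and follows essentially the same route as the paper: truncate the filtration at $V_{M-1}$, rerun Steps \ref{semibetasystem}--\ref{refinement} with finite index sets, and tile the one-dimensional last quotient $Z_{M-1}$ directly by intervals (the paper uses $4n\widehat{e}_M+B_{Z_{M-1}}(0,2)$ while you use half-length $1$; both satisfy the required inclusions with the same $r$ and $R$, so your closing claim that ``any other scaling would let the normality constant drift with $M$'' is unfounded, though harmless). The one point to repair is the normalization of the basis: a $(1+\varepsilon)$-renorming only yields the constant $(1+\varepsilon)K$ with respect to the original norm, hence ``$K$-normal for every $K>2+\frac{1}{r}(a+2b+\frac{8}{\delta})$'' rather than the exact value claimed in (c); since $X$ is finite dimensional you should instead take an Auerbach basis, which gives $||e_i||=||e_i^*||=1$ in the original norm with no renorming at all, as the paper does.
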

\begin{proof} Let us review the 4 steps of the proof of Theorem \ref{separablecase} for the $M$-dimensional space $X$.
In the following we can assume  $M\ge 2$.
Firstly, we consider an Auerbach basis, i.e. a normalized basis $\{e_i\}_{i=1}^M$ of $X$ with  
associated normalized  biorthogonal functionals $\{e_i^*\}_{i=1}^M$.
We consider the subspaces $V_0=\{0\}$ and $V_k=[\{e_1,\dots,e_k\}]$ for $0<k\le M$,
the quotient spaces $Z_k=X/V_k$ and the  
 natural quotients $Q_k:X\rightarrow Z_k$. Also, consider $Q_{k,k+1}:Z_k\rightarrow Z_{k+1}$  for $k+1\le M$.

\medskip

\begin{enumerate}

\item[(1)] Step \ref{semibetasystem} in the proof of Theorem \ref{separablecase} provides a finite and normalized
family $\{v_{j,k}^*,v_{j,k}\}_{j=0}^{m_k}\subset Z_k^*\times Z_k$ 
 for $0\le k\le M-1$ and certain integer numbers $m_k\ge 0$ satisfying conditions (1.a)-(1.c).

\medskip

\item[(2)] In Step \ref{1scaling} of the proof of Theorem \ref{separablecase}, the surjective functions $\pi_{j,k}:Z_k\rightarrow \mathbb R^2$, 
the sets $T_k,\,H^k_0$, $H^{k,p}_j$, $\widetilde{H}^k_n$  and the associated points $h^{k,p}_j$, $\widetilde{h}^{k}_n$
 are defined in the same way 
for \, $0\le k \le M-2$, \, $p=1,2,3,4$,   \, $n\in \mathbb Z \setminus \{0\}$ \, and \, $0\le j \le m_k$. 
The only difference is that
the sets $H^k_0$ are defined by the finite intersection
$$H_0^k=\bigcap_{0\le j \le m_k}\pi_{j,k}^{-1}(U_0) \quad \text{ for } \quad 0\le k\le M-2.$$
By relabeling $\{H^k_0$, $H^{k,p}_j$, $\widetilde{H}^k_n\}$ and their associated points we get the convex tiling $\{H^k_j\}_{j=0}^\infty$ of $Z_k$ with associated points $\{h^k_j\}_{j=0}^\infty$ (where $h^k_0=0$) satisfying  properties (2.a) to (2.d).

Now, for $k=M-1$, we define 
\begin{align*}
\pi_{0,M-1}&:Z_{M-1}\rightarrow \mathbb R, \quad  \pi_{0,M-1}(z)=e_{M}^*(z) \quad  \text{ for } z\in Z_{M-1},\\
T_{M-1}&=\{z\in Z_{M-1}:\, |e^*_{M}(z)|\le 2\}=B_{Z_{M-1}}(0,2), \\
\widetilde{H}^{M-1}_n&=4n\widehat{e}_{M}+T_{M-1} \quad \text{ for } n\in \mathbb Z,\\
\end{align*}
where $\widetilde{H}^{M-1}_n$ has the associated point $4n\widehat{e}_{M}$ for all $n\in \mathbb Z$. Again, by relabeling $\{\widetilde{H}^{M-1}_n\}$ and the associated points, 
we get the convex
 tiling $\{H^{M-1}_j\}_{j=0}^\infty$ of $Z_{M-1}$ with associated points $\{h^{M-1}_j\}_{j=0}^\infty$ (where $h^{M-1}_0=0$) satisfying the properties
\begin{enumerate}
\item[{\em (2.a')}] $B_{Z_{M-1}}(0,2)= H^{M-1}_0$,
\item[{\em (2.b')}] $Q_{M-1,M}(h^{M-1}_j)=0=Q_{M-1,M}(h)$ for all $h\in H_j^{M-1}$,
\item[{\em (2.c')}] $B_{Z_{M-1}}(h^{M-1}_j,2)= H^{M-1}_j$ and thus
\item[{\em (2.d')}] $||h-h^{M-1}_j||\le 2$ for every $h\in H_j^{M-1}$.
\end{enumerate}
This is just a tiling in the $1$-dimensional space $Z_{M-1}$ by closed intervals of length $4$.

\medskip

\item[(3)] Step \ref{tilingwithtubes} in the proof of Theorem \ref{separablecase} provides a tiling $\{C^k_j\}_{j,k}$ of $X$ (where  $0\le k\le M-1$ \, and \, $j\ge 0$; \, also \, ``$k>0 \Rightarrow j>0$'') and associated points $\{x^k_j\}_{j,k}\subset X$ such that
\begin{equation*}
B(x^k_j+V_k,r)\subset C^k_j\subset B(x^k_j+V_k,R) \quad \text{ for } 0\le k\le M-1,\\
\end{equation*}
with same $x^0_0=0$ and same constants $r>0$ and $R=a+2b+\frac{8}{\delta}$. In this case, the only difference is the definition of the sets $C^k_j$ as the finite intersection
\begin{equation*}
C^k_j=Q_k^{-1}(H^k_j)\cap \bigcap_{m=k+1}^{M}Q_m^{-1}(H^m_0), \quad \text{ for } 0\le k\le M-1,
\end{equation*}
where we define $H^M_0=\{0\}$. Clearly, for $k=M-1$ and $j>0$ the sets $C^{M-1}_j= Q_{M-1}^{-1}(H^{M-1}_j)=
\{x\in X:\, |e_{M}^*(x)-4n_j|\le 2\}$ for certain $n_j\in \mathbb Z\setminus \{0\}$. So for $k=M-1$ and $j>0$, 
\begin{equation*}
C^{M-1}_j=B(x^{M-1}_j+V_{M-1},2)=x^{M-1}_j+V_{M-1}+B(0,2).
\end{equation*}

\medskip
\item[(4)] Step \ref{refinement} in the proof of Theorem \ref{separablecase} provides a starlike tiling 
$\{C_{i,j}^k\}_{i,j,k}$ of $X$ (where  $0\le k\le M-1$ \, and \, $j\ge 0$; \, also \, ``$k>0 \Rightarrow j>0$'' and ``$k=0 \Rightarrow i=0$'') and associated points $\{d^k_i+x^k_j\}\subset X$ satisfying
\begin{equation} \label{normalNdim}
B(d^k_i+x^k_j,r)\subset C_{i,j}^k\subset B(d^k_i+x^k_j,R'),
\end{equation}
with  same points $d^0_0=x^0_0=0$ and same constants $r$ and $R'=a+2b+\frac{8}{\delta}+2r$. 
Indeed, in order to get the above tiling, the points $\{d^k_i\}_i\subset V_k$, the sets $\{V^k_i\}_i$ and $\{D^k_i\}_i$ are defined in the same way for $0\le k\le M-1$. Moreover, the tiles $C_{i,j}^k=(x^k_j+D^k_i)\cap C^k_j$ are defined in the same way for 
$0\le k\le M-1$. 

Finally, this tiling $\{C_{i,j}^k\}_{i,j,k}$ verifies that the tiles $C_{0,j}^0=(x^0_j+D^0_0)\cap C^0_j=C^0_j$ are convex for all $j$ and the tile $C^0_{0,0}$ is a convex neighborhood of $0$ because $d^0_0+x^0_0=0$. So we get (a)  and (b) in the statement of Proposition \ref{Ndim}.

From the inclusions in \eqref{normalNdim}, we can take the  constant of normality as $K=2+\frac{1}{r}(a+2b+\frac{8}{\delta})$ and get (c) in the statement of Proposition \ref{Ndim}.
\end{enumerate}
\end{proof}

\begin{Rem} \label{finitedimensionalconvex} An analogous result to Proposition \ref{separableconvex} can be established in the case of any finite dimensional space $(X,||\cdot||)$
if we fix a natural number $N$ and consider any space of dimension $M > N$. By slicing the ``cylinders'' $C^k_j$ in the proof  of Proposition \ref{Ndim} for $0<k\le N$ with similar  projections $P_k$ to those given in the proof of Proposition \ref{separableconvex} and for $k>0$ and $N<k<M$ by using ``modified Voronoi cells'' as in the proof of Theorem \ref{separablecase}, we get a  normal tiling $\{\widetilde{C}^k_{i,j}\}_{i,j,k}$ (labelled in the same way as in the proof  of Proposition \ref{Ndim}) with normality constant not depending on the dimension $M> N$ (although it depends on $N$), where  the tiles $C^k_{i,j}$ are convex for $0\le k\le N$ and the rest of them are  starlike.
\end{Rem}

\begin{Rem} It is worth mentioning a result of G. Pisier \cite{Pisier}, asserting that there is a separable infinite dimensional Banach space $(X,||\cdot||)$ and a number $s>0$ such that  every finite rank projection $P$ from $X$ into $X$ satisfies $||P||\ge s\sqrt{\rank P}$.
So in this particular situation it is not clear how to adapt the construction of Preiss by using projections over finite dimensional subspaces of $X$.
\end{Rem}

\bigskip

\noindent {\bf Acknowledgments.} M. Jimenez-Sevilla wishes to thank the Institut de Math\'ematiques de la Universit\'e de Bordeaux, where  this work was carried out.

\end{document}